\newcommand\R{\mathbb{R}}
\newcommand\calH{\mathcal{H}}
\newcommand\Curl{\mathop{\mathrm{curl}}}
\newcommand\curl{\mathop{\mathrm{curl}}}
\newcommand\dist{{\mathrm{dist}}}
\newcommand\SO{{\mathrm{SO}}}
\newcommand\dx{{\,dx}}
\newtheorem{theorem}{Theorem}[section]
\newtheorem{lemma}[theorem]{Lemma}
\numberwithin{equation}{section}
\newcommand{\M}{\mathcal{M}}
\newcommand{\dive}{\mathrm{div}\,}
\begin{document}
	\begin{center}
		{ \LARGE
		 {Sharp rigidity estimates for incompatible fields as consequence of the Bourgain Brezis div-curl result}
			\\[5mm]}
		{\today}\\[5mm]
		Sergio Conti,$^{1}$ and Adriana Garroni,$^{2}$
		\\[2mm]
		{\em $^1$ Institut f\"ur Angewandte Mathematik,
			Universit\"at Bonn\\ 53115 Bonn, Germany }\\
		{\em $^{2}$ Dipartimento di Matematica, Sapienza, Universit\`a di Roma\\
			00185 Roma, Italy}\\
		[4mm]

		\begin{minipage}[c]{0.8\textwidth}
In this note we show that a sharp rigidity estimate and a sharp Korn's inequality for matrix-valued fields whose incompatibility is a bounded measure
can be obtained as a consequence 
of a Hodge decomposition with critical integrability
due to Bourgain and Brezis.
		\end{minipage}
	\end{center}
	

	

	\section{Introduction}
	
The celebrated rigidity estimate of Friesecke James and M\"uller \cite{FrieseckeJamesMuellerCRAS2002,FrieseckeJamesMueller2002} 
is a fundamental tool for the analysis of variational, geometrically nonlinear models in elasticity. Indeed it allows to obtain compactness of the deformation gradient  (and therefore, for example,  linearization or dimension-reduction results) when the elastic energy has a degeneracy
due to frame indifference.
The linear counterpart of this estimate is the classical Korn inequality, which permits to deal with problems in linear elasticity where the energy, due to the approximation of the frame indifference, does not control the antisymmetric part of the displacement gradient, the so called infinitesimal rotations.

In the presence of plastic deformations, and then beyond the elastic regime mentioned above, the variational models should account for plastic slips that occur at microscopic scale and for the presence of topological defects which induce elastic distortion. Therefore, the relevant variable is not the gradient of a deformation but a more general matrix-valued field, the elastic strain, which may not be a gradient and may have a non trivial curl, possibly concentrated on low-dimensional sets.

%

This framework calls for a version of the rigidity result, and of its linear counterpart, for fields that may be incompatible, in the sense that are not gradients, but for which one knows that the curl is a bounded measure.
Such results do not seem to be available in the literature in dimension greater than two, and are stated in Theorems \ref{rigidityn} and \ref{kornn}.

In the sequel, given a matrix-valued function $\beta\in L^1(\Omega; \R^{n\times n})$, $\Curl\beta$ denotes the tensor-valued distribution whose rows are the curl, in the sense of distributions, of the rows of $\beta$, 
in the sense that $\Curl\beta_{ijk}:=\partial_k \beta_{ij}-\partial_j \beta_{ik}$,
moreover $\M(\Omega;\R^k)$ denotes the space of Radon measures in $\Omega$ with values in $\R^k$. In dimension 3, $\Curl\beta$ can as usual be identified with a matrix-valued distribution 
$\alpha_{il}:=\sum_{j,k}\epsilon_{ljk}\Curl\beta_{ijk}$.
The natural space is then the one obtained by the Sobolev-conjugate exponent $1^*:=n/(n-1)$.

\begin{theorem}\label{rigidityn}
	Given  an open, bounded, connected,
	and Lipschitz set $\Omega$ in $\R^n$, with $n\ge 2$, there exists a constant $C = C(n,\Omega) > 0$  such that for every
	$\beta\in L^1(\Omega;\R^{n\times n})$ with $\Curl \beta\in \M(\Omega;\R^{n\times n\times n})$, there exists a rotation $R\in \SO(n)$ such that
\begin{equation}\label{f-rigidityn}
	\|\beta-R\|_{L^{1^*}(\Omega)}\leq C\left(\|\dist(\beta,\SO(n))\|_{L^{1^*}(\Omega)}+|\Curl \beta|(\Omega)  \right).
\end{equation}
\end{theorem}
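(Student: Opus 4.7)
The plan is to split $\beta$ into a gradient and a critically integrable remainder controlled by $|\Curl \beta|(\Omega)$, apply the classical Friesecke--James--M\"uller rigidity estimate to the gradient part, and recombine. The crucial ingredient is a Bourgain--Brezis-type solvability result: since $\Curl\beta\in\M(\Omega;\R^{n\times n\times n})$ and $\Curl\beta$ automatically satisfies the cyclic identity $\partial_l(\Curl\beta)_{ijk}+\partial_k(\Curl\beta)_{ilj}+\partial_j(\Curl\beta)_{ikl}=0$, one can produce $\eta\in L^{1^*}(\Omega;\R^{n\times n})$ with
\begin{equation*}
\Curl\eta=\Curl\beta\quad\text{in }\Omega,\qquad \|\eta\|_{L^{1^*}(\Omega)}\leq C\,|\Curl\beta|(\Omega).
\end{equation*}
The critical exponent $1^*=n/(n-1)$ is exactly the endpoint that classical Calder\'on--Zygmund theory misses (giving only $L^p$ for $p<1^*$) and that the Bourgain--Brezis Hodge decomposition supplies. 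Once $\eta$ is in hand, the field $w:=\beta-\eta$ is curl-free in $\Omega$, hence (after a reduction to a simply connected setting, e.g.\ by working on a ball containing $\Omega$) representable as $w=\nabla u$ for some $u\in W^{1,1^*}(\Omega;\R^n)$.

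\textbf{Application of Friesecke--James--M\"uller.} With $\nabla u=\beta-\eta$, the pointwise bound $\dist(\nabla u,\SO(n))\leq \dist(\beta,\SO(n))+|\eta|$ gives
\begin{equation*}
\|\dist(\nabla u,\SO(n))\|_{L^{1^*}(\Omega)}\leq \|\dist(\beta,\SO(n))\|_{L^{1^*}(\Omega)}+\|\eta\|_{L^{1^*}(\Omega)}.
\end{equation*}
Since $1^*\in(1,\infty)$, the classical rigidity estimate applies at this exponent and delivers a rotation $R\in\SO(n)$ with
\begin{equation*}
\|\nabla u-R\|_{L^{1^*}(\Omega)}\leq C\,\|\dist(\nabla u,\SO(n))\|_{L^{1^*}(\Omega)}.
\end{equation*}
The triangle inequality $\|\beta-R\|_{L^{1^*}}\leq \|\nabla u-R\|_{L^{1^*}}+\|\eta\|_{L^{1^*}}$, combined with the two previous bounds and the critical estimate on $\eta$, yields \eqref{f-rigidityn}.

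\textbf{Where the difficulty lies.} The only non-routine step is the construction of $\eta$. The Bourgain--Brezis result is naturally formulated on $\R^n$ or the torus for divergence-free data, whereas here one needs to invert $\Curl$ on a general bounded Lipschitz domain against a measure-valued right-hand side, without destroying the critical $L^{1^*}$ control. Two obstacles must be managed simultaneously: first, extending $\Curl\beta$ beyond $\Omega$ without creating an uncontrolled concentration of curl mass on $\partial\Omega$ (which would spoil the bound); second, preserving the compatibility condition needed to apply Bourgain--Brezis after extension. I expect the right approach to be a careful extension of $\beta$ itself (rather than of the measure $\Curl\beta$) to a ball $B\supset\Omega$, chosen so that the extension has curl of total variation comparable to $|\Curl\beta|(\Omega)$ and so that a global Bourgain--Brezis Hodge decomposition on $B$ can be performed. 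Once this single critical estimate on $\eta$ is secured, the rest of the proof is a direct combination with the known FJM rigidity theorem.
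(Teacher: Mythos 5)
Your high-level strategy coincides with the paper's: decompose $\beta$ as a gradient plus a remainder controlled in $L^{1^*}$ by $|\Curl\beta|(\Omega)$, apply the $L^p$ version of Friesecke--James--M\"uller to the gradient part, and recombine. That recombination is fine. But there is a genuine gap exactly where you locate it: the existence of $\eta\in L^{1^*}(\Omega;\R^{n\times n})$ with $\Curl\eta=\Curl\beta$ and $\|\eta\|_{L^{1^*}(\Omega)}\le C\,|\Curl\beta|(\Omega)$ on a general bounded Lipschitz domain is the entire content of the theorem beyond FJM, and you only offer a plan for it (``a careful extension of $\beta$ to a ball''). That plan is itself problematic: $\beta$ is merely $L^1$ with measure-valued curl, so it has no trace on $\partial\Omega$, and a generic extension to $B\supset\Omega$ creates a distributional curl concentrated on $\partial\Omega$ (the tangential jump of $\beta$) that cannot be bounded by $|\Curl\beta|(\Omega)$. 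An extension by bi-Lipschitz reflection pulling back the rows of $\beta$ as $1$-forms could avoid this, but that is a nontrivial argument you have not given; moreover your step ``$w$ curl-free $\Rightarrow w=\nabla u$'' silently relies on the same unproved extension when $\Omega$ is not simply connected.

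The paper circumvents both difficulties. On the unit cube it performs the splitting concretely: solve the Neumann problem $\Delta u=\dive\beta$, $\partial_n u=\beta n$, set $Y:=\beta-Du$, so that $\dive Y=0$, $Yn=0$ on $\partial Q$ and $\curl Y=\curl\beta$; then a duality argument against the Bourgain--Brezis decomposition $X=D\varphi+\dive\gamma$ with $\gamma$ antisymmetric, bounded and in $W^{1,n}_0$ (Corollary 18' of Bourgain--Brezis) yields $\|Y\|_{L^{1^*}(Q)}\le c\,\|\curl\beta\|_{L^1(Q)}$ --- this is precisely the lemma your argument presupposes. For a general Lipschitz $\Omega$ the paper does not extend $\beta$ at all: it applies the scale-invariant cube estimate on a Whitney covering of $\Omega$, obtains a rotation $R_j$ on each cube, and glues them into a single rotation via a weighted Poincar\'e inequality with weight $\dist^s(x,\partial\Omega)$, $s=1^*$. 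To close your argument you would need to supply either this localization machinery or a rigorous extension lemma; as written, the critical step is asserted rather than proved.
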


\begin{theorem}\label{kornn}
	Given an open, bounded, connected,
	and Lipschitz set $\Omega$ in $\R^n$, with $n\ge2$, there exists a constant $C = C(n,\Omega) > 0$ such that for every
	$\beta\in L^1(\Omega;\R^{n\times n})$ with $\Curl \beta\in \M(\Omega;\R^{n\times n\times n})$ there exists an antisymmetric matrix $A$ such that
	\begin{equation}\label{f-kornn}
	\|\beta-A\|_{L^{1^*}(\Omega)}\leq C\left(\|\beta+\beta^T\|_{L^{1^*}(\Omega)}+|\Curl \beta|(\Omega)  \right).
	\end{equation}	
\end{theorem}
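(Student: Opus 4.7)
The plan is to reduce the incompatible Korn estimate to the classical Korn inequality in $L^{1^*}$ by first isolating the non-gradient content of $\beta$ into a field whose $L^{1^*}$ norm is controlled by the mass of $\Curl\beta$ in a sharp endpoint fashion.

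Concretely, the crucial preliminary step is to produce a Hodge-type decomposition
\[
\beta=\nabla u+\gamma
\]
on $\Omega$ with $\gamma\in L^{1^*}(\Omega;\R^{n\times n})$ satisfying $\Curl\gamma=\Curl\beta$ distributionally, together with the critical bound
\[
\|\gamma\|_{L^{1^*}(\Omega)}\le C(n,\Omega)\,|\Curl\beta|(\Omega).
\]
This is the only non-classical input and it is exactly the content of the Bourgain--Brezis div-curl estimate advertised in the abstract: controlling an $L^{1^*}$-norm by the mass of a measure (rather than by an $L^1$-norm, which would be the output of classical Calderón--Zygmund theory only in weak $L^{1^*}$) is a critical-endpoint statement unavailable from standard Sobolev or Riesz-potential arguments.

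Given such a splitting, the remainder is bookkeeping. Since $1^*=n/(n-1)>1$, the classical Korn inequality applies in $L^{1^*}$ on the Lipschitz domain $\Omega$: there exists an antisymmetric matrix $A=A(u)$ with
\[
\|\nabla u-A\|_{L^{1^*}(\Omega)}\le C\|\sym\nabla u\|_{L^{1^*}(\Omega)}.
\]
Since $\sym\nabla u=\tfrac12(\beta+\beta^T)-\sym\gamma$, the right-hand side is bounded by $\tfrac12\|\beta+\beta^T\|_{L^{1^*}}+\|\gamma\|_{L^{1^*}}$, and one closes the argument with the triangle inequality $\|\beta-A\|_{L^{1^*}}\le\|\nabla u-A\|_{L^{1^*}}+\|\gamma\|_{L^{1^*}}$ together with the bound on $\gamma$ from the first step.

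The main obstacle is therefore not the Korn step but the first one: producing a representative $\gamma\in L^{1^*}$ of the curl with mass-controlled norm. A minor technical nuisance is realising this decomposition on a Lipschitz domain rather than on $\R^n$ or a cube, which I would handle by extending $\beta$ and $\Curl\beta$ to a larger reference domain and then restricting, so that only the global Bourgain--Brezis estimate is actually invoked.
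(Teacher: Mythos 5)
Your core strategy --- split $\beta=\nabla u+\gamma$ with $\Curl\gamma=\Curl\beta$ and $\|\gamma\|_{L^{1^*}}\le C|\Curl\beta|(\Omega)$ via the Bourgain--Brezis critical div-curl estimate, then apply the classical Korn inequality in $L^{1^*}$ to $\nabla u$ --- is exactly what the paper does on the model domain $Q=(0,1)^n$ (there $u$ solves $\Delta u=\dive\beta$ with Neumann data, $\gamma=Y=\beta-Du$ is divergence-free with vanishing normal trace, and the $L^{1^*}$ bound on $Y$ comes by duality from the Bourgain--Brezis decomposition $X=D\varphi+\dive\gamma$ with $\gamma\in L^\infty$). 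So the cube case of your argument is sound and identical in spirit to the paper's.

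The genuine gap is your last step, the reduction of a general Lipschitz $\Omega$ to the model case by ``extending $\beta$ and $\Curl\beta$ to a larger reference domain and then restricting.'' No such extension is available. Extending $\beta$ by zero adds to $\Curl\beta$ a surface term of the form $\beta\wedge n\,\calH^{n-1}\res\partial\Omega$, whose mass is not controlled by the right-hand side of \eqref{f-kornn} (indeed $\beta\in L^1(\Omega)$ has no trace at all); equivalently, the zero-extension of the measure $\Curl\beta$ need not be closed in $\R^n$, so the global div-curl estimate cannot be applied to it. A bi-Lipschitz reflection across $\partial\Omega$ does preserve closedness of the rows of $\beta$ viewed as $1$-forms, but the pullback destroys any control on the symmetric part $\beta+\beta^T$, which is precisely the quantity Korn's inequality needs. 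The paper avoids extension altogether: it covers $\Omega$ by Whitney cubes $Q^j$ with finite overlap, applies the cube estimate on each to get antisymmetric matrices $A_j$, glues them with a partition of unity into $A:=\sum_j\varphi_j A_j$, and then uses the weighted Poincar\'e inequality $\|A-A_*\|^s_{L^s(\Omega)}\le c\int_\Omega \dist^s(x,\partial\Omega)|DA|^s\dx$ to produce a single matrix; this is the step you would need to supply. Separately, note that the critical estimate $\|\gamma\|_{L^{1^*}}\le C|\Curl\beta|$ is false for $n=2$ (take $\curl Y=\delta_{x_0}$), so your scheme cannot cover the case $n=2$ of the statement; as in the paper, that case must be handled by the different two-dimensional arguments of Garroni--Leoni--Ponsiglione and M\"uller--Scardia--Zeppieri.
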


In the above results $|\Curl \beta|(\Omega) $ denotes the total variation of the measure $\Curl\beta$. In the case when $\Curl\beta$ is absolutely continuous with respect to the Lebesgue measure it is simply given by the $L^1$ norm of $\Curl\beta$. We remark that the main point in this estimate is that it provides
an estimate with the total variation and therefore allows for concentrated incompatibilities, which correspond to concentration of crystal defects.
Korn's inequalities with incompatibility involving the $L^p$ norm of $\Curl\beta$, with $p>1$, instead of the total variation, can be found in \cite{LewintanNeff2019necas} and the references therein.

In 2 dimensions we have $1^*=2$ and Korn and rigidity estimates for incompatible field (Theorems \ref{kornn}
and \ref{rigidityn} with $n=2$) have been obtained in \cite[Theorem~11]{GarroniLeoniPonsiglione2010} and \cite[Theorem~3.3]{MuellerScardiaZeppieri2014}, respectively. A key ingredient in the proof is an interpolation inequality due to Bourgain and Brezis which provides an estimate of the $H^{-1}$ norm of  a field in terms of the $L^1$ norm and the $H^{-2}$ norm of its divergence, \cite{BourgainBrezis2007}. 

In higher dimension a recent result by Lauteri and Luckhaus, \cite{LauteriLuckhaus2017}, shows that the rigidity estimate of Theorem \ref{rigidityn} can be obtained in the Lorentz space $L^{1^*,\infty}$, but their results do not cover the case $p=1^*$ in the results above.

Here, we show that the result in dimension three and above, Theorems \ref{rigidityn} and \ref{kornn}, can be obtained very easily via Hodge decomposition using a sharp regularity result due to Bourgain and Brezis for the div-curl system \cite {BourgainBrezis2004,BourgainBrezis2007} (see also \cite{LanzaniStein2005} and \cite{VanSchaftingen2004}). 

	\newcommand{\dHnu}{\,\,\mathrm{d}\calH^{n-1}}
	\section{Proof of the results for $n\ge 3$}
	
\begin{lemma}\label{lemmabb} Let $n\ge 3$ and $Y\in L^1(Q;\R^n)$ be such that $\dive Y=0$ distributionally and $Yn=0$ on $\partial Q$. Then
 \begin{equation}
  \| Y\|_{L^{1^*}(Q)} \le c(n) \|\curl Y\|_{L^1(Q)}.
 \end{equation}
\end{lemma}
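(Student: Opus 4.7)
The plan is to reduce the estimate on the cube $Q$ to a corresponding estimate on a flat torus, where the sharp Bourgain--Brezis regularity for closed $L^1$ $2$-forms applies directly.

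Assuming $Q$ is a cube, I would first extend $Y$ by successive reflections across its faces, flipping the sign of the normal component and preserving the tangential components. This yields a periodic field $\tilde Y$ on a torus $\T^n$ of twice the side of $Q$. The hypothesis $Yn=0$ on $\partial Q$ is precisely what makes the normal component match across each face (the tangential components match automatically), so $\tilde Y$ has no distributional jump. Consequently $\dive\tilde Y=0$ on all of $\T^n$, and its distributional curl $\omega:=\curl\tilde Y$ is an $L^1(\T^n;\R^{n\times n}_\skw)$ field with $\|\omega\|_{L^1(\T^n)}\le 2^n\|\curl Y\|_{L^1(Q)}$. The odd symmetry of $\tilde Y_i$ in $x_i$ also forces $\int_{\T^n}\tilde Y=0$.

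Next I would apply the Bourgain--Brezis regularity on the torus. The $2$-form $\omega$ is trivially closed and is exact (equals $d\tilde Y$), so the sharp estimate produces a $1$-form $\alpha\in L^{1^*}(\T^n)$ with $d\alpha=\omega$, which after imposing the Coulomb gauge $\dive\alpha=0$ together with $\int_{\T^n}\alpha=0$ satisfies
\[
\|\alpha\|_{L^{1^*}(\T^n)}\le C(n)\,\|\omega\|_{L^1(\T^n)}.
\]
By Hodge theory on $\T^n$, the difference $\tilde Y-\alpha$ is closed and co-closed with zero mean, hence a harmonic $1$-form forced to vanish; thus $\tilde Y=\alpha$, and restricting to $Q$ yields $\|Y\|_{L^{1^*}(Q)}\le\|\tilde Y\|_{L^{1^*}(\T^n)}\le C(n)\|\curl Y\|_{L^1(Q)}$.

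The main obstacle is citing Bourgain--Brezis in exactly the periodic form required: on $\R^n$ the sharp regularity of \cite{BourgainBrezis2004,BourgainBrezis2007} produces an $L^{1^*}$ potential for any closed $L^1$ $2$-form, and the transfer to $\T^n$ is standard (by periodization or a partition-of-unity argument). An alternative route that avoids the torus entirely would be a pure duality argument: pair $Y$ against a test $\phi$, reduce via Helmholtz decomposition to a divergence-free, tangentially vanishing $\psi$, use Bourgain--Brezis on the zero extension of $\psi$ to write $\psi=\dive\eta$ with $\eta$ antisymmetric and in $L^\infty(\R^n)$, and exploit the antisymmetry of $\eta$ to convert $\int_Q Y\cdot\psi$ into $\tfrac12\int_Q\curl Y:\eta$; however, the boundary contributions arising when integrating by parts against a merely $L^\infty$ (non-trace-regular) field $\eta$ are delicate, which is precisely what the reflection route sidesteps.
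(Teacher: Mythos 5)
Your main argument is correct, but it travels a genuinely different (in fact, dual) road from the paper. The paper never leaves the cube: it takes an arbitrary $X\in L^n(Q;\R^n)$, invokes the critical Hodge decomposition of Bourgain--Brezis (Corollary 18' of \cite{BourgainBrezis2007}) to write $X=D\varphi+\dive\gamma$ with $\gamma$ antisymmetric and in $W^{1,n}_0\cap L^\infty$, and then estimates $\int_Q Y\cdot X\dx$ by duality, the antisymmetry of $\gamma$ converting $\int_Q DY\cdot\gamma$ into $\tfrac12\int_Q\curl Y\cdot\gamma$. This is exactly your ``alternative route,'' and the difficulty you flag there --- boundary terms from integrating by parts against a merely $L^\infty$ field $\eta$ --- does not actually arise, because the Bourgain--Brezis decomposition delivers $\gamma\in W^{1,n}_0$ (zero trace \emph{and} enough Sobolev regularity), so both integrations by parts are clean; that is precisely why the paper's proof is so short. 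Your primal route instead periodizes by reflection (the parity bookkeeping is right: odd normal component kills the divergence jump via $Y\cdot n=0$, even tangential components kill the curl jump, and the odd symmetry gives zero mean) and then applies the sharp div-curl/Hodge estimate on $\T^n$, identifying $\tilde Y$ with the Coulomb-gauge potential by uniqueness of harmonic $1$-forms; this is legitimate, since the torus is in fact the native setting of \cite{BourgainBrezis2007}, and it has the merit of exhibiting $Y$ itself as the Bourgain--Brezis potential rather than arguing by duality. Two shared caveats: first, for $Y$ merely in $L^1$ the jump computations across the faces require a preliminary smoothing, which the paper also dispatches by ``reflect and mollify,'' so you should state the density reduction explicitly before manipulating distributional traces; second, the restriction $n\ge3$ enters your route because a closed $L^1$ $2$-form admits an $L^{1^*}$ primitive only when $2\le n-1$ (for $n=2$ the counterexample $\curl Y=\delta_{x_0}$ mentioned in the paper applies), and it would be worth saying where your argument uses this.
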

Here and below $Q:=(0,1)^n$; the assumption on $Y$ means that extending $Y$ by zero to $\R^n$ we obtain a divergence-free field.

We recall that this assertion does not hold for $n=2$, as one can see with $\curl Y=\delta_{x_0}$ for some $x_0\in Q$, see for example \cite[Remark~2]{BourgainBrezis2004}. Indeed, the proof of Theorem \ref{rigidityn} and Theorem \ref{kornn} for $n=2$ given 
in \cite[Theorem~11]{GarroniLeoniPonsiglione2010} and \cite[Theorem~3.3]{MuellerScardiaZeppieri2014}
is different, although it also uses in a crucial way the construction by Bourgan and Brezis.

We remark that for $n=3$ this implies that for every $f\in L^1(\Omega;\R^3)$, with $\dive\, f=0$, there exists a unique solution $Z\in L^{3/2}(Q;\R^3)$ of the system
\begin{equation}\label{f-div-curl}
	\begin{cases}
	\dive\, Z= 0 & \hbox{in } Q,\\
	\curl\, Z = f & \hbox{in } Q,\\
	Z\cdot n=0 &\hbox{on }\partial Q
	\end{cases}
\end{equation}
	and it satisfies
	\begin{equation}\label{f-stima}
	\|Z\|_{L^{3/2}(Q)}\leq C \|f\|_{L^1(Q)}
	\end{equation}
	for some constant independent on $f$. 	By an approximation argument it follows that the same result holds for measures, i.e., $f\in\M(Q;\R^3)$, with $\dive f=0$ in the sense of distributions. By  \cite[Theorem~4.1]{BrezisVanschaftingen2007}  this result can be extended to smooth bounded sets $\Omega\subset\R^n$. This argument was used for the study of geometrically linear dislocations in \cite{ContiGarroniOrtiz2015}.
\begin{proof}
By density, it suffices to prove the bound for regular fields $Y$ (one can reflect $Y$ across all boundaries and then mollify).
 Let $X\in L^n(Q;\R^n)$. By Corollary 18' in \cite{BourgainBrezis2007} there are $\varphi\in W^{1,n}(Q)$ and $\gamma\in W^{1,n}_0\cap L^\infty(Q;\R^{n\times n})$ such that
 $\gamma+\gamma^T=0$, $X=D\varphi+\dive \gamma$, and
 \begin{equation}
\|\varphi\|_{W^{1,n}(Q)}+\|\gamma\|_{L^\infty(Q)}+\|\gamma\|_{W^{1,n}(Q)}\le c(n)\|X\|_{L^n(Q)}.
\end{equation}
We estimate
\begin{equation}
\begin{split}
 \int_Q Y\cdot X \dx &= \int_Q Y\cdot D\varphi \dx + \int_Q Y\cdot \dive\gamma \dx \\&=  \int_{\partial Q} Y\cdot \gamma n \dHnu -\int_Q DY\cdot \gamma \dx\\
 &=-\frac12\int_Q (DY-DY^T)\cdot \gamma dx = -\frac12 \int_Q \curl Y \cdot \gamma \dx \\
 &\le \frac12 \|\curl Y\|_{L^1(Q)}\|\gamma\|_{L^\infty(Q)}
 \le c(n)\|\curl Y\|_{L^1(Q)}\|X\|_{L^n(Q)}.
\end{split}
 \end{equation}
\end{proof}

\begin{proof}[Proof of Theorem \ref{rigidityn}]
To simplify notation, we write $s:=1^*=n/(n-1)$
and denote by $c$ a constant that depends only on the spatial dimension $n$, and that may change from line to line.
It suffices to prove the assertion if the right-hand side is finite.

First we prove the result in the case that $\Omega=Q=(0,1)^n$. 

By density it suffices to prove the assertion for $\beta$ smooth. Indeed, it is sufficient to extend $\beta$ by reflection and then to mollify.
We define $u\in W^{2,1}(Q;\R^n)$ as the solution of $\Delta u=\dive \beta$ in $Q$, $\partial_n u=\beta n$ on $\partial Q$, and set $Y:=\beta-Du$. Since $\dive Y=0$, $Yn=0$ on $\partial Q$, and $\curl Y=\curl\beta$, application of Lemma \ref{lemmabb} to each row of $Y$ yields
\begin{equation}
 \|Y\|_{L^{s}(Q)} \le c \|\curl \beta\|_{L^{1}(Q)}.
\end{equation}
Therefore, since $Du=\beta-Y$,
\begin{equation}
\begin{split}
\| \dist(Du, \SO(n))  \|_{L^{s}(Q)} &\le 
\| \dist(\beta, \SO(n))  \|_{L^{s}(Q)} 
+\| Y  \|_{L^{s}(Q)} 
\\
&
\le \| \dist(\beta, \SO(n))  \|_{L^{s}(Q)} +c \|\curl \beta\|_{L^{1}(Q)},
\end{split}
\end{equation}
and by the geometric rigidity estimate by Friesecke, James and M\"uller \cite{FrieseckeJamesMuellerCRAS2002,FrieseckeJamesMueller2002} 
(see also  \cite{ContiDolzmannMueller2014} for the version in $L^p$)
we obtain the assertion.
By scaling invariance, we have shown that for any cube $Q_r:=x_*+(-r,r)^n$ and any $\beta:Q_r\to \R^{n\times n}$ there is $R\in \SO(n)$ such that
\begin{equation}\label{stimacubo}
 \|\beta-R\|_{L^{s}(Q_r)}\le c\left(
\| \dist(Du, \SO(n))  \|_{L^{s}(Q_r)} + \|\curl \beta\|_{L^{1}(Q_r)} \right).
 \end{equation}

We now deal with a generic bounded Lipschitz connected set $\Omega$. We consider a Whitney covering of $\Omega$ by countably many cubes $Q^j:=x_j+(-r_j,r_j)^n$ with finite overlap, so that
\begin{equation}\label{eqchiQomega}
 \chi_\Omega \le \sum_j \chi_{\hat Q^j}\le \sum_j \chi_{Q^j}\le c \chi_\Omega,
\end{equation}
where  $\hat Q^j:=x_j+(-\frac 12  r_j,\frac 12  r_j)^n$, 
and such that $r_j\le \dist(Q_j,\partial\Omega)\le c r_j$.
Let $\varphi_j\in C^\infty_c(Q^j)$ be a partition of unity subordinated to the cubes $Q^j$, and such that $|D\varphi_j|\le c/r_j$.

We apply the estimate \eqref{stimacubo} on each cube $Q^j$ and obtain rotations $R_j\in SO(n)$. 
If $Q_j\cap Q_k\ne\emptyset$, then by a triangular inequality we obtain
\begin{equation}\label{eqqjcapqk}
 \int_{Q_j\cap Q_k} |R_j-R_k|^s \dx \le c\int_{Q_j\cap Q_k} |\beta-R_j|^s \dx + c\int_{Q_j\cap Q_k} |\beta-R_k|^s \dx.
\end{equation}
We define $R:\Omega\to\R^{n\times n}$ by
$R:=\sum_j \varphi_j R_j$, and observe that
\begin{equation}\label{eqbetaR}
 \|\beta-R\|_{L^s(\Omega)}^s \le c\sum_j \|\beta-R_j\|_{L^s(Q_j)}^s.
\end{equation}
From $\sum_j\varphi_j=1$ we obtain $\sum_j D\varphi_j=0$ on $\Omega$, so that
\begin{equation}
 DR = \sum_j D\varphi_j R_j=\sum_j D\varphi_j (R_j-\beta) .
\end{equation}
Recalling that $\dist(Q_j,\partial\Omega)\le c r_j$ and \eqref{eqchiQomega},
\begin{equation}
\begin{split}
 \int_\Omega \dist^s(x,\partial\Omega) |DR|^s \dx &\le c \sum_j \int_{Q_j}  r_j^s |D\varphi_j|^s |\beta-R_j|^s \dx\\
 &
\le c \sum_j \int_{Q_j}  |\beta-R_j|^s \dx.
 \end{split}
\end{equation}
Since \eqref{stimacubo} holds in each cube $Q^j$,
\begin{equation}
\begin{split}
 \int_\Omega \dist^s(x,\partial\Omega) |DR|^s \dx &
 \le c\sum_j  \left[ \|\dist(\beta,\SO(n))\|^s_{L^s(Q^j)} +  (|\curl \beta|(Q_j))^s\right]\\
 &\le c \|\dist(\beta,\SO(n))\|^s_{L^s(\Omega)} \\
 &+ c (|\curl \beta|(\Omega))^{s-1} \sum_j |\curl \beta|(Q_j)\\
 &\le c \|\dist(\beta,\SO(n))\|^s_{L^s(\Omega)} + c (|\curl \beta|(\Omega))^s.
\end{split}
\end{equation}
By the weighted Poincar\'e inequality \cite[Theorem~8.8]{Kufner80} there is a matrix $R_*\in \R^{n\times n}$ such that
\begin{equation}
 \|R-R_*\|_{L^s(\Omega)}^s\le c
\int_\Omega \dist^s(x,\partial\Omega) |DR|^s \dx.
 \end{equation}
 Let now $\hat R\in \SO(n)$ be a matrix such that $|\hat R-R_*|=\dist(R_*,\SO(n))$. Then, recalling \eqref{eqbetaR},
 \begin{equation}
 \begin{split}
 |\Omega|^{1/s} \, |\hat R-R_*| &\le \|R_*-R\|_{L^s} + 
\|R-\beta\|_{L^s} + \|\dist(\beta ,\SO(n))\|_{L^s} \\
&\le
c \|\dist(\beta,\SO(n))\|_{L^s(\Omega)} + c |\curl \beta|(\Omega).
 \end{split}
 \end{equation}
\end{proof}

\begin{proof}[Proof of Theorem \ref{kornn}]
This follows by the same proof, replacing geometric rigidity in $L^{s}$ by Korn's inequality in $L^{s}$ (see, for example,
\cite{ContiDolzmannMueller2014} and references therein).

\end{proof}
	
	\section*{Acknowledgements}
	This work was partially funded by the Deutsche Forschungsgemeinschaft (DFG, German Research Foundation) through project 211504053 -- SFB 1060
	and project 390685813 -- GZ 2047/1, and by MIUR via PRIN 2017 Variational
	methods for stationary and evolution problems with singularities and interfaces, 2017BTM7SN.

%
%

	
	
	\bibliographystyle{alpha-noname}
	\bibliography{Co-Gar-20-preprint}

\end{document}